\documentclass[11pt]{article}%
\usepackage{amssymb}
\usepackage{euler}
\usepackage{amsfonts}%
\usepackage{amsmath}%
\usepackage{graphicx}
\providecommand{\U}[1]{\protect\rule{.1in}{.1in}}
\newtheorem{theorem}{Theorem}

\newtheorem{corollary}[theorem]{Corollary}

\newtheorem{lemma}[theorem]{Lemma}

\newtheorem{proposition}[theorem]{Proposition}

\newenvironment{proof}[1][Proof]{\noindent\textbf{#1.} }{\ \rule{0.5em}{0.5em}}

\begin{document}

\title{\textsf{Monotone convergence theorems equivalent to Markov's principle}}
\author{\textsf{Douglas S. Bridges}}
\maketitle

\begin{abstract}%
\noindent
\textsf{The notions of provisional, negative, and apparent convergence to }$0
$\textsf{\ are introduced. It is then shown that Markov's principle is
equivalent, in Bishop-style constructive mathematics, to the statement `every
decreasing sequence of real numbers provisionally convergent to }%
$0$\textsf{\ actually converges to }$0$\textsf{', and that this equivalence
holds with `provisionally' replaced by `negatively'. Finally, apparent
convergence and convergence are related by means of the anti-Specker
principle.}

\end{abstract}%

\bigskip
\normalfont\sf

It is well known that the monotone convergence theorem for sequences is
constructively equivalent to the omniscience principle

\begin{quote}
\textbf{LPO}: \ For each binary sequence $\left(  a_{n}\right)  _{n\geqslant1}
$, either $a_{n}=0$ for all $n$ or else there exists $n$ with $a_{n}=1,$
\end{quote}%

\noindent
and is therefore essentially nonconstructive. In this note, written entirely
within Bishop-style constructive mathematics (\textbf{BISH),}\footnote{%
\normalfont\sf
By `Bishop-style constructive mathematics' we mean mathematics as presented in
\cite{Bishop,BB,BVbook}. Essentially, this is mathematics carried out with
intuitionistic logic and within a set- or type-theoretic framework such as
those found in \cite{Balps,ML,Rathjen}. Our paper is written entirely in that
constructive setting.} we discuss the relation between three constructively
weaker forms of the monotone convergence theorem, Markov's principle (a
weakening of \textbf{LPO}),

\begin{quote}
\textbf{MP}: \ For each binary sequence $\left(  a_{n}\right)  _{n\geqslant1}%
$, if it is impossible that $a_{n}=0$ for all $n$, then there exists $n$ with
$a_{n}=1$,
\end{quote}%

\noindent
and the \textbf{anti-Specker property}\footnote{%
\normalfont\sf
This property is equivalent, over \textbf{BISH}, to Brouwer's fan theorem
\textbf{FT}$_{c}$ for so-called \textquotedblleft$c$-bars\textquotedblright%
\ \cite{BBerg}, and is classically equivalent to the sequential compactness
(Bolzano--Weierstra\ss \ property) of $\left[  0,1\right]  $.} for the
interval $\left[  0,1\right]  $,

\begin{quote}
\textbf{AS}$_{\left[  0,1\right]  }$: \ If $\mathbf{x}$ is a sequence in
$\mathbb{R}$ that is eventually bounded away from each point of $\left[
0,1\right]  $, then $\mathbf{x}$ is eventually bounded away uniformly from the
interval $\left[  0,1\right]  $.
\end{quote}%

\bigskip

For our first result, we define a sequence $\mathbf{x}\equiv\left(
x_{n}\right)  _{n\geqslant1}$ in $\mathbb{R}$ to be \textbf{provisionally
convergent to }$0$ if the following holds: for all $t\neq0$ and $\varepsilon
>0$, if $\left\vert x_{n}-t\right\vert <\varepsilon$ eventually (that is, for
all sufficiently large $n$), then $\left\vert t\right\vert \leqslant
\varepsilon$.

\begin{proposition}
\label{1}A sequence in $\mathbb{R}$ converges to $0$ if and only if it
converges to a limit and is provisionally convergent to $0$.
\end{proposition}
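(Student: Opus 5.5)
The forward direction is immediate, so the substance lies in the converse, where the work is to show that the limit must equal $0$ without using any omniscience principle.

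\emph{Forward direction.} Suppose $x_n\to 0$. Then $\mathbf{x}$ certainly converges to a limit. For provisional convergence, take $t\neq0$ and $\varepsilon>0$ with $\left\vert x_n-t\right\vert<\varepsilon$ for all sufficiently large $n$. Letting $n\to\infty$ gives $\left\vert t\right\vert=\lim_n\left\vert x_n-t\right\vert\leqslant\varepsilon$.

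\emph{Converse.} Suppose $x_n\to x_\infty$ and $\mathbf{x}$ is provisionally convergent to $0$. It suffices to show $\left\vert x_\infty\right\vert\leqslant\varepsilon$ for every $\varepsilon>0$, which gives $x_\infty=0$. Constructively we cannot decide whether $x_\infty=0$ or $x_\infty\neq0$, so we use the approximate splitting of the reals. Fix $\varepsilon>0$. Either $\left\vert x_\infty\right\vert<\varepsilon$, or $\left\vert x_\infty\right\vert>\varepsilon/2$.

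In the first case there is nothing to prove.

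In the second case, put $t=x_\infty$, so $t\neq0$. Choose $\delta$ with $0<\delta<\varepsilon/2$. Since $x_n\to t$, we have $\left\vert x_n-t\right\vert<\delta$ for all sufficiently large $n$. Provisional convergence then gives $\left\vert t\right\vert\leqslant\delta<\varepsilon/2$. This contradicts $\left\vert t\right\vert>\varepsilon/2$, so this case cannot occur.

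\emph{Conclusion.} Each branch yields $\left\vert x_\infty\right\vert\leqslant\varepsilon$: the first directly, and the second because it is contradictory, and a contradiction proves any statement, including $\left\vert x_\infty\right\vert\leqslant\varepsilon$. This step is constructively valid. Since $\varepsilon>0$ was arbitrary, $x_\infty=0$, and hence $x_n\to0$.

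The only delicate point is this final step. We must not argue "$x_\infty\neq0$ is impossible, hence $x_\infty=0$," because the inference from $\neg\left(x\neq0\right)$ to $x=0$ is exactly what needs care constructively. The $\varepsilon$-splitting avoids that inference. It also avoids the stronger classical reasoning that would invoke Markov's principle.
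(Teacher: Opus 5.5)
Your proof is correct, and its core is the same as the paper's: with $t$ the limit, apply provisional convergence with a tolerance smaller than $|t|$ and reach $|t|\leqslant c<|t|$. The difference is the $\varepsilon$-splitting you wrap around this, and the reason you give for it has the logic backwards.

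The inference from $\neg(x\neq0)$ to $x=0$ is constructively valid, because apartness on $\mathbb{R}$ is tight. In \textbf{BISH}, $x\leqslant y$ is equivalent to $\neg(x>y)$. Concretely, for a real $x$ given by a regular sequence $(x_n)$ of rationals, each comparison $|x_n|\leqslant 2/n$ is decidable, and the alternative $|x_n|>2/n$ would give $|x|>0$. That is why the paper can simply suppose $|t|>0$, use the tolerance $|t|/2$ to obtain $|t|\leqslant|t|/2$, and conclude $t=0$ outright, with no appeal to \textbf{MP}.

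The inference that is equivalent to \textbf{MP} is the converse, $\neg(x=0)\Rightarrow x\neq0$. This is exactly what the paper extracts from statement (ii) in Proposition~\ref{2}.

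So your splitting is valid but buys nothing. Your second branch is the paper's reductio with $\varepsilon/2$ in place of $0$. Your first branch only recovers, one $\varepsilon$ at a time, what tightness gives at once.
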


\begin{proof}
Suppose that $\mathbf{x}\equiv\left(  x_{n}\right)  _{n\geqslant1}$ converges
to $0$, that $\varepsilon>0$, and that $\left\vert x_{n}-t\right\vert
<\varepsilon$ for all sufficiently large $n$. If $\left\vert t\right\vert
>\varepsilon$, then for all sufficiently large $n$ we have $\left\vert
x_{n}\right\vert <\left\vert t\right\vert -\varepsilon$ and therefore%
\[
\left\vert t\right\vert \leqslant\left\vert x_{n}-t\right\vert +\left\vert
x_{n}\right\vert <\varepsilon+\left(  \left\vert t\right\vert -\varepsilon
\right)  =\left\vert t\right\vert ,
\]
which is absurd. Hence $\left\vert t\right\vert \leqslant\varepsilon$, and
therefore $\mathbf{x}$ is provisionally convergent.

Now let $\mathbf{x}$ converge to a limit $t\in\left[  0,1\right]  $ and be
provisionally convergent to $0$. Suppose that $\left\vert t\right\vert >0$;
then $\left\vert x_{n}-t\right\vert <\left\vert t\right\vert /2$ eventually,
so, by the provisional convergence of $\mathbf{x}$ to $0$, $\left\vert
t\right\vert \leqslant\left\vert t\right\vert /2$, which is absurd. Hence
$t=0$.
\end{proof}%

\bigskip

In the recursive model of \textbf{BISH} there exists a sequence $\mathbf{s}%
\equiv\left(  s_{n}\right)  _{n\geqslant1}$ in $\left[  0,1\right]  $ that is
eventually bounded away from each point of $\left[  0,1\right]  $. (Such a
sequence is known as a \textbf{Specker sequence}; for more on these, see
Chapter 3 of \cite{BR} or Specker's original paper \cite{Specker}.) In
particular, for each $x\in\left[  0,1\right]  $, the sequence $\left(
x-s_{n}\right)  _{n\geqslant1}$ is neither convergent to $0$ nor provisionally
convergent to $0$. To see that the latter holds, choose $\varepsilon>0$ such
that $\left\vert x-s_{n}\right\vert >\varepsilon$ eventually. If $t\neq0$ and
$\left\vert x-s_{n}-t\right\vert <\varepsilon/2 $, then $\left\vert
t\right\vert \geq\left\vert x-s_{n}\right\vert -\varepsilon/2>\varepsilon/2$.

\begin{lemma}
\label{130511a3}Let $x\in\left[  0,1\right]  $ satisfy $\lnot\left(
x=1\right)  $. Then the sequence $\left(  x^{n}\right)  _{n\geqslant1}$ is
provisionally convergent to $0$. If it converges to $0$, then $x<1$.
\end{lemma}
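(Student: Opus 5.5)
Two things need proving: provisional convergence of $(x^n)$ to $0$, and that convergence to $0$ forces $x<1$. In both we argue by contradiction. This is legitimate because each conclusion, $|t|\leqslant\varepsilon$ in the first case, is a negative statement, i.e. the negation of $|t|>\varepsilon$. Such statements may be established constructively by deriving absurdity from their negation.

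\textbf{Provisional convergence.} Let $t\neq0$ and $\varepsilon>0$, and suppose there is $N$ with $|x^n-t|<\varepsilon$ for all $n\geqslant N$. To prove $|t|\leqslant\varepsilon$, suppose instead that $|t|>\varepsilon$; we aim to derive $x=1$, contradicting $\lnot(x=1)$. Set $\delta=|t|-\varepsilon>0$. Then $x^n\geqslant|t|-|x^n-t|>\delta$ for all $n\geqslant N$. In particular $x^N>\delta>0$, so $x>0$.

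Now suppose $x<1$. Then $x^n\to0$ in the usual constructive way: write $x=1/(1+h)$ with $h>0$ and use Bernoulli's inequality $(1+h)^n\geqslant1+nh$. So $x^n<\delta$ for some large $n\geqslant N$, which is a contradiction. Hence $\lnot(x<1)$, and since $x\leqslant1$ this gives $x\geqslant1$, hence $x=1$. This contradicts the hypothesis, so $\lnot(|t|>\varepsilon)$, i.e. $|t|\leqslant\varepsilon$.

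\textbf{The second claim.} Suppose $x^n\to0$. Choose $N$ with $x^N<1/2$. Since $x\leqslant1$ and $x^N<1$, we want to conclude $x<1$. The natural route uses the constructive dichotomy: for real $x$, either $x<1$ or $x>(1/2)^{1/N}$. In the second case $x^N>1/2$, contradicting $x^N<1/2$, so $x<1$. The dichotomy is available because $(1/2)^{1/N}<1$ and approximate splitting, $a<b\Rightarrow x<b\lor x>a$, holds constructively. Alternatively, and more directly, $1-x^N=(1-x)(1+x+\dots+x^{N-1})\leqslant N(1-x)$, so $1-x\geqslant(1-x^N)/N>1/(2N)>0$. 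This form avoids any case split, and it is the one we would use.

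The only delicate point is the bookkeeping of which steps are merely negative. In the first part, everything is inside a proof by contradiction of a negative conclusion, so deducing $x=1$ from $\lnot(x<1)$ and $x\leqslant1$ is harmless. In the second part, the estimate above gives $x<1$ positively, which is what is required.
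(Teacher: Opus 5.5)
Your proof is correct and follows essentially the paper's route. The paper argues the first claim the same way: it shows that $x\neq1$ (equivalently $x<1$ here) would give $x^n\to0$ and hence $|t|\leqslant\varepsilon$, so $|t|>\varepsilon$ forces $\lnot(x\neq1)$, i.e.\ $x=1$; your explicit $\delta$-argument just inlines the first half of Proposition~\ref{1}. For the second claim the paper simply writes $x=\sqrt[N]{x^N}<1$, and your bound $1-x\geqslant(1-x^N)/N$ is a quantitative version of that same monotonicity step.
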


\begin{proof}
Let $\varepsilon>0$, and suppose that $\left\vert x^{n}-t\right\vert
<\varepsilon$ for all $n\geqslant N$. If $x\neq1$, then $x^{n}\rightarrow0 $,
so $\left\vert t\right\vert \leqslant\varepsilon$; from which it follows that
if $\left\vert t\right\vert >\varepsilon$, then $\lnot\left(  x\neq1\right)
$, and therefore $x=1$, a contradiction. Hence, in fact, $\left\vert
t\right\vert \leqslant\varepsilon$. On the other hand, if $x^{n}\rightarrow0$,
then there exists $N$ such that $x^{n}<1$, whence $x=\sqrt[n]{x^{n}}<1$.
\end{proof}

\begin{proposition}
\label{2}The following are equivalent over\emph{\ \textbf{BISH}}:

\begin{itemize}
\item[\emph{(i)}] \emph{\textbf{MP}}.

\item[\emph{(ii)}] Every decreasing sequence of nonnegative real numbers that
is provisionally convergent to $0$ actually converges to $0$.
\end{itemize}
\end{proposition}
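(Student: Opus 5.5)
For the implication from (ii) to \textbf{MP}, we use Lemma \ref{130511a3}. Let $\left(a_{n}\right)_{n\geqslant1}$ be a binary sequence such that it is impossible that $a_{n}=0$ for all $n$. We may assume it has at most one term equal to $1$, by replacing $a_{n}$ with $1$ only at the first $n$ where the original sequence equals $1$. Define
\[
x=1-\sum_{n=1}^{\infty}a_{n}2^{-n}\in\left[0,1\right].
\]
If $x=1$, then every $a_{n}=0$, because $a_{n}=1$ would give $x=1-2^{-n}<1$. This is contradicted by hypothesis, so $\lnot\left(x=1\right)$. The sequence $\left(x^{n}\right)$ is decreasing and nonnegative, and by Lemma \ref{130511a3} it is provisionally convergent to $0$. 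By (ii) it converges to $0$, and the lemma then yields $x<1$. Choose $N$ with $\sum_{n\leqslant N}a_{n}2^{-n}$ close enough to the full sum; more simply, compute $x$ to within $2^{-N-1}$ where $1-x>2^{-N}$. Then some $a_{n}=1$ with $n\leqslant N$, and a finite search finds it.

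For the implication from \textbf{MP} to (ii), let $\mathbf{x}$ be decreasing, nonnegative, and provisionally convergent to $0$. Given $\varepsilon>0$, we must produce $N$ with $x_{N}<\varepsilon$; monotonicity then gives $x_{n}<\varepsilon$ for all $n\geqslant N$. Since $x_{n}<\varepsilon$ versus $x_{n}>\varepsilon/2$ is decidable approximately, we build a binary sequence $\left(a_{n}\right)$ with the following properties: $a_{n}=1$ implies $x_{n}<\varepsilon$, and $a_{n}=0$ implies $x_{n}>\varepsilon/2$. The sequence can be made increasing by monotonicity. By \textbf{MP}, it suffices to show that $a_{n}=0$ for all $n$ is impossible.

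So suppose $a_{n}=0$ for all $n$, so that $x_{n}>\varepsilon/2$ for all $n$. Classically the sequence would converge to some $t\geqslant\varepsilon/2$, contradicting provisional convergence, but constructively the limit need not exist. Instead we argue by a double-negation dichotomy. For a fixed $\delta>0$, we approximate: either $x_{n}-x_{m}<\delta$ for all $m\geqslant n$, or not. We want to exhibit some $t$ with $t\geqslant\varepsilon/2>0$ and $\left\vert x_{n}-t\right\vert <\delta$ eventually, with $\delta<\varepsilon/2$; provisional convergence then gives $\left\vert t\right\vert \leqslant\delta<\varepsilon/2$, a contradiction. Since we are proving a negation, we may use the fact that $\lnot\lnot$ there is $n$ with $x_{n}-x_{m}<\delta$ for all $m\geqslant n$. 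This holds because otherwise we could find infinitely many drops of size at least $\delta$; in proving a negation such a construction is allowed via $\lnot\lnot$-stable reasoning, and it forces $x_{n}<0$ eventually, a contradiction. Given such $n$, take $t=x_{n}$ with $\delta=\varepsilon/4$. Then $\left\vert x_{m}-t\right\vert \leqslant\delta$ for $m\geqslant n$; using $<2\delta$ to obtain strict inequality, provisional convergence yields $x_{n}\leqslant\varepsilon/2$. Together with $x_{n}>\varepsilon/2$ this is a contradiction.

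The main obstacle is making this last step rigorous. We must justify $\lnot\lnot\exists n\,\forall m\geqslant n\left(x_{n}-x_{m}<\delta\right)$ intuitionistically. The plan is to show that its negation is contradictory: assuming it, we inductively produce $n_{1}<n_{2}<\cdots$ with $x_{n_{k}}-x_{n_{k+1}}>\delta/2$. This uses the fact that $\lnot\forall m\left(x_{n}-x_{m}<\delta\right)$ combined with an approximate decision on each $m$; care is needed since only $\lnot\lnot\exists m$ is available. Because the goal is itself negative, this is harmless, since $\lnot\lnot$ commutes with the finitely many steps needed. After $k>2x_{1}/\delta$ steps this gives $x_{n_{k}}<0$, which is impossible. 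Note that only finitely many steps are needed, which avoids dependent choice under double negation.
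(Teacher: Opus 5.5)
Your proof is correct. For (ii)$\Rightarrow$(i) you follow the paper, except that you unpack by hand the equivalence of \textbf{MP} with $\forall_{x}\left(\lnot(x=0)\Rightarrow x\neq0\right)$; your $1-x$ plays the role of the paper's $a$.

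For (i)$\Rightarrow$(ii) you use the same two ingredients as the paper, but in the opposite order. The ingredients are provisional convergence applied with $t=x_{n}$, and a descent by drops of fixed size that nonnegativity cuts off after finitely many steps. The paper applies provisional convergence at every stage to refute ``all later terms stay within $\varepsilon$ of the current one'', then calls on \textbf{MP} to extract an actual drop, and repeats this $K$ times inside the reductio. You instead run the descent first, under double negation, to get $\lnot\lnot\exists_{n}\forall_{m\geqslant n}\left(x_{n}-x_{m}<\delta\right)$, and you invoke provisional convergence only once.

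Since the goal at that point is a contradiction, your double-negation bookkeeping is legitimate. You could also simply have used the assumed \textbf{MP} there, as the paper does. Your route buys something, though: \textbf{MP} enters only in the final extraction of $N$ with $x_{N}<\varepsilon$. So your inner argument shows, in \textbf{BISH} alone, that a decreasing sequence of nonnegative numbers that is provisionally convergent to $0$ is negatively convergent to $0$. Combined with Proposition~\ref{3005a3}, the two notions then coincide for such sequences, which makes Propositions~\ref{2} and~\ref{3005a2} immediately interderivable.
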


\begin{proof}
Assume \textbf{MP}. Let $\mathbf{x\equiv}\left(  x_{n}\right)  _{n\geqslant1}
$ be a decreasing sequence of nonnegative numbers converging provisionally to
$0$. Given $\varepsilon>0$, assume that $x_{n}>\varepsilon$ for all $n$.
Setting $n_{0}=1$, construct an increasing binary sequence $\left(
\lambda_{n}\right)  _{n\geqslant1}$ such that%
\begin{align*}
\lambda_{n}=0  & \Rightarrow\forall_{k\leqslant n}\left(  \left\vert
x_{k}-x_{1}\right\vert <\varepsilon\right)  ,\\
\lambda_{n}=1-\lambda_{n-1}  & \Rightarrow\left\vert x_{n}-x_{1}\right\vert
>\varepsilon/2.
\end{align*}
Since $\mathbf{x}$ is provisionally convergent to $0$, if $\lambda_{n}=0$ for
all $n$, then $\left\vert x_{1}\right\vert <\varepsilon$, which is absurd. It
follows from \textbf{MP} that there exists $n_{1}$ such that $\left\vert
x_{n_{1}}-x_{1}\right\vert >\varepsilon/2$. Repeating this with $x_{n_{1}}$
replacing $x_{1}$, and with the sequence $\left(  x_{n_{1}+1},x_{n_{1}%
+2},\ldots\right)  $ replacing $\mathbf{x}$, we obtain $n_{2}>n_{1}$ such that
$\left\vert x_{n_{2}}-x_{n_{1}}\right\vert >\varepsilon/2$. Carrying on in
this way, we construct a strictly increasing sequence $\left(  n_{k}\right)
_{k\geqslant1}$ such that $\left\vert x_{n_{k}}-x_{n_{k-1}}\right\vert
>\varepsilon/2$ for each $k\geqslant1$. Choosing a positive integer%
\[
K\geqslant2\left(  1+\frac{x_{1}}{\varepsilon}\right)  ,
\]
since the sequence $\mathbf{x}$ is decreasing we now see that%
\begin{align*}
x_{n_{K}}  & =x_{n_{1}}+\sum_{k=2}^{K}\left(  x_{n_{k}}-x_{n_{k-1}}\right) \\
& \leqslant x_{1}-\sum_{k=2}^{K}\left\vert x_{n_{k}}-x_{n_{k-1}}\right\vert
<x_{1}-\left(  K-2\right)  \frac{\varepsilon}{2}\leqslant0,
\end{align*}
a contradiction from which we conclude that%
\[
\lnot\forall_{n}\left(  x_{n}>\varepsilon\right)  .
\]
Applying \textbf{MP} once more, we deduce that there exists $\nu$ such that
$x_{\nu}<2\varepsilon$. Hence $0\leqslant x_{n}\leqslant x_{\nu}<2\varepsilon$
for all $n\geqslant\nu$. Since $\varepsilon>0$ is arbitrary, the proof that
(i) implies (ii) is complete.

Now assume (ii), and consider $a\in\left[  0,1\right]  $ with $\lnot\left(
a=0\right)  .$ By Lemma \ref{130511a3}, the sequence $\left(  \left(
1-a\right)  ^{n}\right)  _{n\geqslant1}$ is provisionally convergent to $0$,
and if it converges to $0$, then $1-a<1$ and therefore $a\neq0$. Thus the
statement in question implies that%
\[
\forall_{x\in\mathbb{R}}\left(  \lnot(x=0)\Rightarrow x\neq0\right)  ,
\]
which is equivalent to \textbf{MP}. Hence (ii) implies (i).
\end{proof}%

\bigskip

There is another weak notion of convergence that, under \textbf{MP}, we can
relate to convergence: we say that a sequence $\mathbf{x}$ of real numbers is
\textbf{negatively convergent to }$0$ if for each $\varepsilon>0$ it is
impossible that $\left\vert x_{n}\right\vert >\varepsilon$ for infinitely many
$n$.

\begin{proposition}
\label{3005a3}If a sequence in $\mathbb{R}$ is negatively convergent to $0$,
then it is provisionally convergent to $0$.
\end{proposition}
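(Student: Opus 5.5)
Since the conclusion $\left\vert t\right\vert \leqslant\varepsilon$ is a negated statement (it is equivalent to $\lnot\left(\left\vert t\right\vert >\varepsilon\right)$), I can argue entirely by contradiction, which is what lets the purely negative hypothesis do its work. Let $\mathbf{x}$ be negatively convergent to $0$, and let $t\neq0$ and $\varepsilon>0$. Suppose there is $N$ with $\left\vert x_{n}-t\right\vert <\varepsilon$ for all $n\geqslant N$. To prove $\left\vert t\right\vert \leqslant\varepsilon$, I assume $\left\vert t\right\vert >\varepsilon$ and aim for a contradiction.

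Under that assumption, set $\delta=\left\vert t\right\vert -\varepsilon>0$. For every $n\geqslant N$ the triangle inequality gives
\[
\left\vert x_{n}\right\vert \geqslant\left\vert t\right\vert -\left\vert x_{n}-t\right\vert >\left\vert t\right\vert -\varepsilon=\delta .
\]
So $\left\vert x_{n}\right\vert >\delta$ holds for all $n\geqslant N$. In particular it holds for infinitely many $n$, in the strongest constructive sense: for every $m$ there is an explicit $n\geqslant m$, namely $n=\max\{m,N\}$, with $\left\vert x_{n}\right\vert >\delta$. Negative convergence applied with $\delta$ in place of $\varepsilon$ says exactly that this is impossible. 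That is the desired contradiction, so $\left\vert t\right\vert \leqslant\varepsilon$, and $\mathbf{x}$ is provisionally convergent to $0$.

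I expect no real obstacle. The only point to watch is that the argument never needs to decide whether $\left\vert t\right\vert >\varepsilon$; it only derives absurdity from that assumption, and this is constructively valid. The hypothesis $t\neq0$ is not even used.
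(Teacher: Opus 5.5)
Your proof is correct and is essentially the paper's argument. Both assume $|t|>\varepsilon$, deduce $|x_n|>|t|-\varepsilon>0$ for all $n\geqslant N$, and contradict negative convergence. You make explicit what the paper leaves implicit: the bound $\delta=|t|-\varepsilon$ is the one fed into negative convergence, and a tail of indices counts as "infinitely many".
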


\begin{proof}
Let the sequence $\mathbf{x}$ be negatively convergent to $0$. Given
$\varepsilon>0$, suppose that $\left\vert x_{n}-t\right\vert <\varepsilon$ for
all $n\geqslant N$. If $\left\vert t\right\vert >\varepsilon$, then for
$n\geqslant N$ we have $\left\vert x_{n}\right\vert >\left\vert t\right\vert
-\varepsilon>0$, which contradicts the negative convergence of $\mathbf{x}$ to
$0$. Hence $\left\vert t\right\vert \leqslant\varepsilon$. Since
$\varepsilon>0$ is arbitrary, we conclude that $\mathbf{x}$ is provisionally
convergent to $0$.
\end{proof}%

\bigskip

To see that the converse of Proposition \ref{3005a3} is false, take
$x_{n}\equiv\left(  -1\right)  ^{n-1}$. Then $\left\vert x_{n}\right\vert =1$
for each $n$, so $\mathbf{x}$ is not negatively convergent to $0$. However, it
is provisionally convergent to $0$: for if $\left\vert x_{n}-t\right\vert
<\varepsilon<\left\vert t\right\vert $ for all sufficiently large $n$, then
$\left\vert 1-t\right\vert <\varepsilon$ and $\left\vert 1+t\right\vert
<\varepsilon$, so%
\[
1<\min\left\{  \varepsilon-t,\varepsilon+t\right\}  <0,
\]
a contradiction.

\begin{proposition}
\label{3005a1}A sequence of real numbers converges to $0$ if and only if it
converges to a limit and is negatively convergent to $0$.
\end{proposition}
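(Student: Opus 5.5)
The plan is to prove the two directions separately, and the reverse direction will simply be routed through Proposition \ref{1}.

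For the forward direction, suppose $\mathbf{x}$ converges to $0$. Then it certainly converges to a limit. To show it is negatively convergent to $0$, fix $\varepsilon>0$ and pick $N$ such that $\left\vert x_{n}\right\vert <\varepsilon$ for all $n\geqslant N$. Now suppose that $\left\vert x_{n}\right\vert >\varepsilon$ for infinitely many $n$. In the constructive reading this means that for every $m$ there is some $n\geqslant m$ with $\left\vert x_{n}\right\vert >\varepsilon$. Taking $m=N$ gives an $n\geqslant N$ with both $\left\vert x_{n}\right\vert >\varepsilon$ and $\left\vert x_{n}\right\vert <\varepsilon$, which is absurd. So it is impossible that $\left\vert x_{n}\right\vert >\varepsilon$ infinitely often, as required.

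For the reverse direction, suppose $\mathbf{x}$ converges to a limit $t$ and is negatively convergent to $0$. By Proposition \ref{3005a3}, $\mathbf{x}$ is provisionally convergent to $0$. It is therefore convergent and provisionally convergent to $0$, and Proposition \ref{1} gives $\mathbf{x}\rightarrow0$.

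The proof of Proposition \ref{1} mentions $t\in\left[0,1\right]$, but the argument there works for any real $t$. As a safeguard, one can instead argue directly. Assume $\left\vert t\right\vert >0$. Then eventually $\left\vert x_{n}-t\right\vert <\left\vert t\right\vert /2$, so eventually $\left\vert x_{n}\right\vert >\left\vert t\right\vert /2$, and in particular $\left\vert x_{n}\right\vert >\left\vert t\right\vert /2$ for infinitely many $n$. This contradicts negative convergence with $\varepsilon=\left\vert t\right\vert /2$. Hence $\lnot\left(\left\vert t\right\vert >0\right)$, so $t=0$; this last step uses only the constructively valid fact that $\lnot(t\neq0)$ implies $t=0$.

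There is no real obstacle here. The only points needing care are two. First, the forward direction must use the positive, constructive meaning of "infinitely many" when deriving the contradiction. Second, the final step must conclude $t=0$ from a negated apartness, which is legitimate constructively, rather than claiming a positive decision about $t$.
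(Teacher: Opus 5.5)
Your proposal is correct and follows the paper's proof. The forward direction is the trivial one, and the reverse direction goes through Proposition \ref{3005a3} and then Proposition \ref{1}, exactly as in the paper; you are also right that the restriction $t\in\left[0,1\right]$ in the proof of Proposition \ref{1} is inessential, and your direct fallback argument is valid.
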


\begin{proof}
It is trivial that if $\mathbf{x}\equiv\left(  x_{n}\right)  _{n\geqslant1}$
converges to $0$, then it is negatively convergent to $0$. Conversely, if
$\mathbf{x}$ converges to a limit $t$ and is negatively convergent to $0$,
then it follows from Propositions \ref{3005a3} and \ref{1} that $t=0$.
\end{proof}

\begin{proposition}
\label{3005a2}The following are equivalent over\emph{\ \textbf{BISH}}:

\begin{itemize}
\item[\emph{(i)}] \emph{\textbf{MP}}.

\item[\emph{(ii)}] Every decreasing sequence of nonnegative real numbers that
is negatively convergent to $0$ actually converges to $0$.
\end{itemize}
\end{proposition}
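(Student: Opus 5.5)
(i)$\Rightarrow$(ii) follows at once from Proposition \ref{3005a3} and Proposition \ref{2}. Assume \textbf{MP} and let $\mathbf{x}$ be a decreasing sequence of nonnegative reals that is negatively convergent to $0$. Proposition \ref{3005a3} shows that $\mathbf{x}$ is provisionally convergent to $0$. The implication (i)$\Rightarrow$(ii) of Proposition \ref{2} then shows that $\mathbf{x}$ converges to $0$.

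For (ii)$\Rightarrow$(i) I would copy the reverse direction of Proposition \ref{2}, replacing Lemma \ref{130511a3} by a negative-convergence version. Take $a\in\left[0,1\right]$ with $\lnot\left(a=0\right)$ and put $x=1-a$, so that $\lnot\left(x=1\right)$. Consider the decreasing nonnegative sequence $\left(x^{n}\right)_{n\geqslant1}$.

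The key step is to show that this sequence is negatively convergent to $0$. Fix $\varepsilon>0$ and suppose that $x^{n}>\varepsilon$ for infinitely many $n$. Since the sequence is decreasing, this gives $x^{n}>\varepsilon$ for all $n$. We show that this assumption yields $\lnot\left(x\neq1\right)$. Indeed, if $x\neq1$, then $x<1$, so $x^{n}\rightarrow0$, which contradicts $x^{n}>\varepsilon$ for all $n$. From $\lnot\left(x\neq1\right)$ we get $x=1$, because equality on $\mathbb{R}$ is stable. This contradicts $\lnot\left(x=1\right)$. Hence it is impossible that $x^{n}>\varepsilon$ for infinitely many $n$. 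The same argument covers any reading of ``infinitely many'': any single $n$ with $x^{n}>\varepsilon$ already forces $x^{m}>\varepsilon$ for every $m\leqslant n$, and the contradiction is reached the same way.

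By (ii), $x^{n}\rightarrow0$. The last clause of Lemma \ref{130511a3} then gives $x<1$, that is, $a>0$, so $a\neq0$. We have therefore shown that for every $a\in\left[0,1\right]$, $\lnot\left(a=0\right)$ implies $a\neq0$. As noted in the proof of Proposition \ref{2}, this is equivalent to \textbf{MP}; to pass from $\left[0,1\right]$ to all real $a$, apply it to $\min\left\{\left\vert a\right\vert,1\right\}$.

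The main obstacle is the negative-convergence step. It has to be argued without deciding whether $x<1$, using only the double-negation reasoning above, in the same way as the first part of the proof of Lemma \ref{130511a3}.
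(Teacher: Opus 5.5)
Your proposal is correct and follows the paper's proof essentially step for step. Direction (i)$\Rightarrow$(ii) comes from Propositions~\ref{3005a3} and~\ref{2}. For (ii)$\Rightarrow$(i) you use the sequence $\left((1-a)^{n}\right)_{n\geqslant1}$, show it is negatively convergent to $0$ by the same double-negation argument the paper uses, and then deduce from its convergence that $(1-a)^{n}<1$ for some $n$, hence $a\neq0$.
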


\begin{proof}
Propositions \ref{3005a3} and \ref{2} show that (i) implies (ii). Now suppose,
conversely, that every decreasing sequence of nonnegative numbers that is
negatively convergent to $0$ actually converges to $0$. Consider $a\in\left[
0,1\right]  $ with $\lnot\left(  a=0\right)  $, and take $x_{n}\equiv
(1-a)^{n}$. Then $\mathbf{x}$ is a decreasing sequence of nonnegative numbers.
Suppose that there exist $\varepsilon>0$ and $n_{1}<n_{2}<\cdots$ such that
$x_{n_{k}}\geqslant\varepsilon$ for all $k$. If $a>0$, this is impossible; so
we must have $a=0$, a contradiction. We conclude that $\mathbf{x}$ is
negatively convergent to $0$. Hence it converges to $0$, so we can find $n$
such that $\left(  1-a\right)  ^{n}<1$, from which it follows that $a\neq0$.
Thus (ii) implies \textbf{MP}.
\end{proof}

Now recall the \textbf{anti-Specker property,}

\begin{quote}
\textbf{AS}$_{\left[  0,1\right]  }$: \ If $\mathbf{x}$ is a sequence in
$\mathbb{R}$ that is eventually bounded away from each point of $\left[
0,1\right]  $, then $\mathbf{x}$ is eventually bounded away uniformly from the
interval $\left[  0,1\right]  $.
\end{quote}%

\noindent
This property is equivalent, over \textbf{BISH}, to Brouwer's fan theorem
\textbf{FT}$_{c}$ for so-called \textquotedblleft$c$-bars\textquotedblright%
\ \cite{BBerg}, and is classically equivalent to the sequential compactness
(Bolzano--Weierstra\ss \ property) of $\left[  0,1\right]  $. We bring
\textbf{AS}$_{[0,1}]$ into play by introducing yet another convergence notion:
we call a \emph{decreasing} sequence $\mathbf{x}$ of nonnegative numbers
\textbf{apparently convergent to} $0$ if it is eventually bounded away from
each point of the interval $(0,x_{1}]$.

\begin{proposition}
\label{1207a1}\emph{\textbf{AS}}$_{\left[  0,1\right]  }\vdash$ \ Let
$\mathbf{x}\equiv\left(  x_{n}\right)  _{n\geqslant1}$ be a decreasing
sequence of nonnegative real numbers that is apparently convergent to $0$.
Then $\mathbf{x}$ is negatively convergent to $0$.
\end{proposition}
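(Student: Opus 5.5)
The plan is to prove negative convergence directly from its definition. Fix $\varepsilon>0$ and suppose, aiming for a contradiction, that $\left\vert x_{n}\right\vert>\varepsilon$ for infinitely many $n$. Since $\mathbf{x}$ is decreasing and nonnegative, this gives $x_{n}>\varepsilon$ for \emph{all} $n$: for any $k$ there is some $m\geqslant k$ with $x_{m}>\varepsilon$, and then $x_{k}\geqslant x_{m}>\varepsilon$. In particular $x_{1}>\varepsilon$, so $[\varepsilon,x_{1}]$ is a genuine compact interval contained in $(0,x_{1}]$. Every term then lies in it: $x_{n}\in[\varepsilon,x_{1}]$ for all $n$.

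Next I would transfer \textbf{AS}$_{\left[0,1\right]}$ to this interval by the affine map
\[
y_{n}\equiv\frac{x_{n}-\varepsilon}{x_{1}-\varepsilon}.
\]
This map is a bi-Lipschitz bijection of $[\varepsilon,x_{1}]$ onto $[0,1]$, with Lipschitz constants $1/(x_{1}-\varepsilon)$ and $x_{1}-\varepsilon$. Each $s\in[0,1]$ corresponds to the point $\varepsilon+s(x_{1}-\varepsilon)$ of $[\varepsilon,x_{1}]\subset(0,x_{1}]$. By apparent convergence, $\mathbf{x}$ is eventually bounded away from that point. Dividing by $x_{1}-\varepsilon$, the sequence $\mathbf{y}\equiv(y_{n})_{n\geqslant1}$ is eventually bounded away from $s$. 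Hence $\mathbf{y}$ is eventually bounded away from each point of $[0,1]$.

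Now apply \textbf{AS}$_{\left[0,1\right]}$. It gives $\delta>0$ and $N$ such that $\left\vert y_{n}-s\right\vert\geqslant\delta$ for all $n\geqslant N$ and all $s\in[0,1]$. But $y_{N}\in[0,1]$, so taking $s=y_{N}$ yields $0\geqslant\delta$, which is absurd. Therefore it is impossible that $\left\vert x_{n}\right\vert>\varepsilon$ for infinitely many $n$. Since $\varepsilon>0$ was arbitrary, $\mathbf{x}$ is negatively convergent to $0$.

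I expect no serious obstacle. The points needing care are that the contradiction hypothesis yields $x_{1}>\varepsilon$ (so the rescaling is legitimate), and that the negated statement $\lnot(\ldots)$ lets us use the assumption freely inside the argument without any constructive choice.
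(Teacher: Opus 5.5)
Your proof is correct and follows essentially the paper's argument: assume infinitely many terms exceed $\varepsilon$, get $x_{1}>\varepsilon$, rescale to apply \textbf{AS}$_{[0,1]}$ to a compact interval containing the terms, and contradict the resulting uniform bounded-awayness. The only difference is minor. You use monotonicity to get $x_{n}>\varepsilon$ for all $n$ and work on $[\varepsilon,x_{1}]\subset(0,x_{1}]$, where apparent convergence applies directly. The paper instead works on $[0,x_{1}]$ with the subsequence $(x_{n_k})$, combining bounded-awayness from points of $[0,\varepsilon)$ with apparent convergence on $(0,x_{1}]$, and leaves the rescaling implicit.
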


\begin{proof}
Given $\varepsilon>0$, suppose that there exists a strictly increasing
sequence $\left(  n_{k}\right)  _{k\geqslant1}$ of positive integers such that
$x_{n_{k}}>\varepsilon$ for all $k$. Then $x_{1}>\varepsilon$, and $\left(
x_{n_{k}}\right)  _{k\geq1}($ is bounded away from\ each point of the interval
$[0,\varepsilon)$. Since, being a subsequence of the apparently convergent
sequence $\mathbf{x}$, it is eventually bounded away from each point of
$(0,x_{1}]$, it follows that $\left(  x_{n_{k}}\right)  _{k\geqslant1}$ is
eventually bounded away from each point of $\left[  0,x_{1}\right]  $.
Applying \textbf{AS}$_{[0,1]}$, we now see that it is eventually bounded away
from the entire interval $\left[  0,x_{1}\right]  $, which is absurd since
$\mathbf{x}$ is nonnegative and decreasing. Thus it is impossible that
$x_{n}>\varepsilon$ infinitely often.%

\hfill

\end{proof}%

\bigskip

Note that in the recursive model of \textbf{BISH}, any decreasing Specker
sequence in $\left[  0,1\right]  $ is eventually bounded away from each point
of $\left[  0,1\right]  $ and is therefore apparently convergent---but not
convergent---to $0$. Thus in order to pass from apparent convergence to actual
convergence, we will need to add to \textbf{BISH }some explicitly
non-recursive, presumably intuitionistic, principle. In fact, \textbf{AS}%
$_{\left[  0,1\right]  }$ is enough, provided we also assume \textbf{MP}.

\begin{corollary}
\label{1207a1a}\emph{\textbf{MP} + \textbf{AS}}$_{\left[  0,1\right]  }%
\vdash\ $ A decreasing sequence of nonnegative numbers converges to $0$ if and
only if it is apparently convergent to $0$.
\end{corollary}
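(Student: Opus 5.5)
The plan is to prove the two implications separately. The converse direction uses Proposition \ref{1207a1} together with Proposition \ref{3005a2}. The forward direction needs no extra principle beyond \textbf{BISH}.

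First, suppose that the decreasing nonnegative sequence $\mathbf{x}$ converges to $0$, and let $t\in(0,x_{1}]$. Since $t>0$, convergence gives an $N$ such that $x_{n}<t/2$ for all $n\geqslant N$. Then $\left\vert x_{n}-t\right\vert >t/2$ for all $n\geqslant N$, so $\mathbf{x}$ is eventually bounded away from $t$. Hence $\mathbf{x}$ is apparently convergent to $0$. This direction uses neither \textbf{MP} nor \textbf{AS}$_{\left[0,1\right]}$.

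Conversely, suppose $\mathbf{x}$ is apparently convergent to $0$. By Proposition \ref{1207a1}, which relies on \textbf{AS}$_{\left[0,1\right]}$, $\mathbf{x}$ is negatively convergent to $0$. Since $\mathbf{x}$ is decreasing and nonnegative, the implication (i)$\Rightarrow$(ii) of Proposition \ref{3005a2}, which relies on \textbf{MP}, shows that $\mathbf{x}$ converges to $0$. This completes the equivalence.

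The only point needing care is applying Proposition \ref{1207a1} as stated. Its proof applies \textbf{AS}$_{\left[0,1\right]}$ to the interval $[0,x_{1}]$ rather than to $[0,1]$. If that rescaling needs justification, I would use the affine map $y\mapsto y/x_{1}$ when $x_{1}>0$; in the case $x_{1}<\varepsilon$ there is nothing to prove. Since the paper takes Proposition \ref{1207a1} as established, I do not expect a genuine obstacle: the corollary is a direct combination of earlier results.
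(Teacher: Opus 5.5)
Your proof is correct and follows the paper's argument exactly. The forward direction is the elementary observation, which the paper calls clear and you spell out via $\left\vert x_{n}-t\right\vert >t/2$. The converse chains Proposition~\ref{1207a1} with the implication (i)$\Rightarrow$(ii) of Proposition~\ref{3005a2}. Your rescaling remark is a harmless detail that belongs to the proof of Proposition~\ref{1207a1} rather than to the corollary.
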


\begin{proof}
Let $\mathbf{x}$ be a decreasing sequence of nonnegative numbers. Clearly, if
$\mathbf{x}$ converges to $0$, then it is apparently convergent to $0$.
Conversely, if $\mathbf{x}$ is apparently convergent to $0$, then, by
Proposition \ref{1207a1}, it is negatively convergent to $0$; whence, by
Proposition \ref{3005a2}, it converges to $0$.
\end{proof}%

\bigskip
%

\bigskip

%

\bigskip

\bigskip
%

\noindent
\textbf{Keywords}: constructive, Markov's principle, monotone convergence, anti-%

\noindent
Specker property%

\noindent
\textbf{MR (2010) Class. Nos}: 03F60, 40A05%

\bigskip
%

\bigskip
%

\noindent
\textbf{Author's address}: Department of Mathematics \& Statistics, University
of Canterbury, Private Bag 4800, Christchurch 8140, New Zealand.%

\noindent
\textbf{email}: \texttt{dsbridges.math@gmail.com}

\end{document}